\newtheorem{theorem}{Theorem}[subsection]
\newtheorem{proposition}[theorem]{Proposition}
\newtheorem{conjecture}[theorem]{Conjecture}
\newtheorem{corollary}[theorem]{Corollary}
\newtheorem{lemma}[theorem]{Lemma}
\theoremstyle{definition}
\newtheorem{remark}[theorem]{Remark}
\newcommand{\calO}{\mathcal{O}}
\newcommand{\reg}{\rm{reg}}
\newcommand{\nilreg}{\rm{nilreg}}
\newcommand{\frakb}{\mathfrak{b}}
\newcommand{\g}{\mathfrak{g}}
\newcommand{\fraku}{\mathfrak{u}}
\newcommand{\N}{\mathcal{N}}
\newcommand{\calA}{\mathcal{A}}
\newcommand{\frakgl}{\mathfrak{gl}}
\begin{document}

\title[Reducibility of nilpotent commuting varieties]{Reducibility of nilpotent commuting varieties}

\author{Robert M. Guralnick}
\address{Department of Mathematics \\ University of Southern California \\ Los Angeles \\ CA~90089, USA}
\email{guralnic@usc.edu}

\author{Nham V. Ngo}
\address{Department of Mathematics, Statistics, and Computer Science\\ University of Wisconsin-Stout \\ Menomonie \\ WI~54751, USA}
\email{ngon@uwstout.edu}

\date{\today}

\maketitle

\begin{abstract}
Let $\N_n$ be the set of nilpotent $n$ by $n$ matrices over an algebraically closed field $k$. For each $r\ge 2$, let $C_r(\N_n)$ be the variety consisting of all pairwise commuting $r$-tuples of nilpotent matrices. It is well-kown that $C_2(\N_n)$ is irreducible for every $n$. We study in this note the reducibility of $C_r(\N_n)$ for various values of $n$ and $r$. In particular it will be shown that the reducibility of $C_r(\mathfrak{gl}_n)$, the variety of commuting $r$-tuples of $n$ by $n$ matrices, implies that of $C_r(\N_n)$ under certain condition. Then we prove that $C_r(\N_n)$ is reducible for all $n, r\ge 4$. The ingredients of this result are also useful for getting a new lower bound of the dimensions of $C_r(\N_n)$ and $C_r(\mathfrak{gl}_n)$. Finally, we investigate values of $n$ for which the variety $C_3(\N_n)$ of nilpotent commuting triples is reducible. 
\end{abstract}

\section{Introduction}
\subsection{} Let $\mathfrak{gl}_n$ be the Lie algebra consisting all $n$ by $n$ matrices over the field $k$. It will be also considered as an affine space of dimension $n^2$ throughout this note. For each $r\ge 2$ and a closed subvariety $V$ of $\mathfrak{gl}_n$. Define in general
\[ C_r(V)=\{(v_1,\ldots, v_r)\in V^r~|~[v_i,v_j]=0~,~1\le i\le j\le r \}, \]   
a {\it commuting variety}\footnote{This is a generalization for the concept of the variety of commuting $r$-tuples of matrices in \cite{G:1961} and \cite{Gu:1992}.} of $r$-tuples over $V$. If $V=\mathfrak{gl}_n$ then $C_r(V)$ is well-known as an ordinary commuting variety of $r$-tuples in $V$. The study of ordinary commuting varieties was originated by the work of Motzkin and Taussky \cite{MT:1955} and then developed by Gerstenhaber \cite{G:1961}. Nowadays, one can find applications of commuting varieties in many branches of mathematics such as functional analysis, representation theory, and geometry \cite{HO:2001}\cite{SFB:1997}\cite{Ba:2001}\cite{BI:2008}. Most of studies have focused on certain contexts, specially when $V$ is either $\mathfrak{gl}_n$ or $\N_n$. We will always call $C_r(\N_n)$ a nilpotent commuting variety in this paper. Unlike $C_r(\mathfrak{gl}_n)$, very little is known for $C_r(\N_n)$. There is a conjecture that reducibility behaviors of both are similar  \cite{Yo:2010}, however, we could not find enough results on $C_r(\N_n)$ to confirm this analogy. 

We are motivated by the connection of nilpotent commuting varieties to cohomology for Frobenius kernels of an algebraic group. In particular, the variety $C_r(\N_n)$ is homeomorphic to the spectrum of the cohomology ring for $(GL_n)_r$ \cite{SFB:1997}. Our goal in this note is to determine the reducibility of $C_r(\N_n)$ for $r\ge 3$. (Note that the case $r=2$ is well-established by work of Basili, Baranovski, Premet \cite{Ba:2000}\cite{Ba:2001}\cite{Pr:2003}). As a consequence, our results provide further evidences to affirming the above conjecture. 

To be convenient, we now review what have been done for $C_r(\mathfrak{gl}_n)$ with $r\ge 3$. It is shown to be irreducible for all $r$ and $n\le 3$ by Gerstenhaber \cite{G:1961}, see also \cite{Gu:1992}. The first author verified the reducibility for $n, r\ge 4$ in \cite{Gu:1992}. So the most interesting case is when $r=3$ which has been contributed by many studies of Guralnick, Sethuraman, Yakimova, Han, and $\check{S}$ivic, \cite{Gu:1992}\cite{GS:2000}\cite{Ya:2009}\cite{H:2005}\cite{Si:2012} and remained an open problem. More explicitly, $C_3(\frakgl_n)$ is known to be irreducible for $n\le 10$ and reducible for all $n\ge 30$, (some arguments require that $k=\mathbb{C}$ \cite{HO:2001}\cite{Si:2012}). On the contrary, the irreducibility for $C_r(\N_n)$ when $n\le 3$ is shown in a paper of the second author \cite{N:2012}. Almost nothing on nilpotent commuting varieties for higher ranks has appeared in literature.\footnote{We are aware of the results in the Ph.D. dissertation of Young, \cite{Yo:2010}, however they have not been published.}

In an alternative language, let $n_r$ be the least integer such that $C_r(\frakgl_n)$ is reducible. Obviously, we only consider this number for $r\ge 3$ as $C_2(\frakgl_n)$ is always irreducible. It is implied from previous paragraph that $n_r=4$ for all $r\ge 4$, and $n_3$ is in the interval $[10,30]$ \cite{HO:2001}. Similarly, let $n'_r$ be the least integer such that $C_r(\N_{n'_r})$ is reducible. One might be interested in whether $n'_r=n_r$ for all $r\ge 3$. Answering this question is one of the targets of this paper.
 
\subsection{Main results} The note is organized as follows. We show in Section \ref{triples} that the irreducibility of $C_r(\N_n)$ implies that of $C_r(\mathfrak{gl}_n)$ under certain condition, cf. Theorem \ref{irreducibility of triples}. As a consequence, we then have $n'_r=4$ (hence $n'_r=n_r$) for each $r>3$ and $n'_3$ is in the interval $[10,30]$ as $n_3$ is, cf. Corollary \ref{existence of reducibility}. Next, we generalize this result by showing that $C_r(\N_n)$ is in fact reducible for all $n, r\ge 4$, cf. Theorem \ref{reducibility of n,r>3}. The strategy is similar to that for $C_r(\mathfrak{gl}_n)$ in \cite{Gu:1992} and \cite{GS:2000}. Explicitly, we show in Section \ref{n,r>3} that if $C_r(\N_n)$ is irreducible then the subalgebra of $\mathfrak{gl}_n$ generated by any $r$-tuple $(x_1,\ldots,x_r)$ in it has dimension no more than $n-1$. This verification is a strong evidence supporting the statement that both varieties $C_r(\frakgl_n)$ and $C_r(\N_n)$ are simultaneously reducible (or irreducible) for every $n$ and $r$.

In Section \ref{lower bound}, we give new lower bounds for the dimensions of (nilpotent) commuting varieties. It is known that $C_r(\N_n)$ always has an irreducible component of dimension $n^2-n+(r-1)(n-1)$, cf. Theorem \ref{dim C_r(N)}. Thus this number is a lower bound of $\dim C_r(\N_n)$. Our method is analyzing the subvariety $V_P=G\cdot\fraku^r_P$ where $\fraku_P$ is the Lie algebra of $U_P$, the unipotent subgroup of a certain parabolic subgroup $P$ of $GL_n(k)$. The dimension of $V_P$ is computed in the Proposition \ref{dim subvariety}. We then point out the values of $n$ and $r$ such that $\dim V_P$ is greater than the above lower bound; hence establish a new lower bound for the dimension of $C_r(\N_n)$, cf. Theorem \ref{n>3, r>5}, Corollary \ref{main result of lower bound}. In other words, $C_r(\N_n)$ is not equidimensional for most of values of $n$ and $r$. Analogous result is also obtained for $C_r(\mathfrak{gl}_n)$ in Theorem \ref{lower bound for C_r(g)}.     

We restrict our attetion in the last section to the case when $r=3$. Basically, we narrow down the interval for possible values of $n'_3$. Recall from earlier that $n'_3\le 30$. In this section, we will lower this upper bound down to 16. In particular, using the method and ingredients in \cite{Gu:1992}, applied for $C_3(\frakgl_n)$, we show that $C_3(\N_n)$ is reducible if $n=4s\ge 16$, Theorem \ref{main theorem}.

\section{Notation}\label{notation}

\subsection{} Let $k$ be an algebraically closed field. We always fix $G=GL_n(k)$ be the general linear algebraic group defined over $k$. Then denote by $\g=\frakgl_n$, the Lie algebra of $G$. Sometime we write $\frakgl_n$ to distinguish with other general linear Lie algebras of different ranks. The Lie subalgebra $\mathfrak{t}_n$ of $\frakgl_n$, consisting all the diagonal $n$ by $n$ matrices, is called Cartan subalgebra. The nilpotent cone denoted by $\N_n$ is a subvariety of $\frakgl_n$ of dimension $n^2-n$. 

\subsection{} It is well-known that $G$ acts on $\g$ by conjugation, which we denote by a dot ``$\cdot$". One can consider the nilpotent cone $\N_n$ of $\g$ as a $G$-variety under this action. The variety $\N_n$ is the union of finitely many orbits. 

A nilpotent element (matrix) in $\g$ is called {\it regular} if it is conjugated with the principal Jordan normal form denoted by $x_{\reg}$. Then every regular matrix is nonderogatory in the sense of \cite{Gu:1992}. Furthermore, each element of the form $x+kI_n$ with $x$ regular is nonderogatory. The regular orbit $\calO_{\reg}=G\cdot x_{\reg}$ is dense in the nilpotent cone $\N_n$. We also denote by $z(x)$ the centralizer of $x$ in $\g$. For later convenience, we write $z_{\reg}$ for the centralizer of $x_{\reg}$. It is well-known that $\dim z_{\reg}=n$ and 
\[ z_{\reg}=kI_n\oplus kx_{\reg}\oplus kx^2_{\reg}\oplus\cdots\oplus kx^{n-1}_{\reg} \]
as a vector space. Here $I_n$ is the identity $n$ by $n$ matrix. In fact we can consider  $z_{\reg}$ as the polynomial algebra $k[x_{\reg}]$. It follows that the intersection
\[ z_{\nilreg}=z_{\reg}\cap\N_n=kx_{\reg}\oplus\cdots\oplus kx^{n-1}_{\reg}=k\left<x_{\reg}\right> \] 
as an algebra without a unity. As a remark, our arguments in this note, especially in Subsection \ref{n,r>3} and \ref{n, r>4}, will heavily make use of non-unity commutative algebra generated by $x_1,\ldots,x_m$. So we fix the notation $k\left<x_1,\ldots,x_m\right>$ for this algebra.

For each subset $V$ of $\g^m$ with some $m\ge 1$, we always write $\overline{V}$ for the closure of $V$ in the Zariski topology. 

\section{Nilpotent commuting $r$-tuples}\label{triples}
In this section we provide some general properties for both $C_r(\N_n)$ and $C_r(\g)$. Most of proofs for the latter variety will be omitted.

\subsection{Easy Theorem}
We first introduce a well-known criterion for varieties $C_r(\N_n)$ and $C_r(\g)$ to be irreducible. Denote by $N_n^r$ (resp. $G_n^r$) be the closure of the subset of $C_r(\N_n)$ (resp. $C_r(\g)$) where the first matrix is regular (resp. nonderogatory). 

\begin{theorem}\label{dim C_r(N)}
Suppose $n,r\ge 2$. The varieties $C_r(\N_n)$ and $C_r(\g)$ is irreducible if and only if 
\[ C_r(\N_n)=N_n^r \quad\text{and}\quad C_r(\g)=G^r_n,\]
which are of dimensions $n^2-n+(r-1)(n-1)$ and $n^2+(r-1)n$ respectively.
\end{theorem}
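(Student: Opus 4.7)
The strategy is to exhibit inside each of $C_r(\N_n)$ and $C_r(\g)$ a distinguished nonempty open subset that is \emph{a priori} irreducible and has the claimed dimension. Then $N_n^r$ and $G_n^r$, as closures of such subsets, are themselves irreducible of the stated dimensions, and the equivalence reduces to two elementary observations: an irreducible variety coincides with the closure of any of its nonempty open subsets, and conversely the closure of an irreducible set is irreducible.

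For $C_r(\N_n)$ let
\[ U = \{(x_1, \ldots, x_r) \in C_r(\N_n) : x_1 \in \calO_{\reg}\}. \]
This is open in $C_r(\N_n)$ (being the preimage of the open orbit $\calO_{\reg} \subset \N_n$ under the projection onto the first coordinate) and nonempty, since $(x_{\reg}, y_2, \ldots, y_r) \in U$ for any $y_i \in z_{\nilreg}$, these $y_i$ pairwise commuting inside $z_{\reg} = k[x_{\reg}]$. The key point is that for regular $x_1$ the centralizer $z(x_1) = k[x_1]$ is commutative, so the condition of pairwise commuting for a tuple with regular first entry reduces to the single condition $x_i \in z(x_1) \cap \N_n$. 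Consequently
\[ \phi \colon G \times z_{\nilreg}^{r-1} \longrightarrow U, \qquad (g, y_2, \ldots, y_r) \longmapsto (g \cdot x_{\reg}, g \cdot y_2, \ldots, g \cdot y_r), \]
is surjective, and a direct computation -- again using commutativity of $k[x_{\reg}]$ -- shows each fiber is a coset of the $n$-dimensional stabilizer of $x_{\reg}$ in $G$. Hence $U$ is irreducible of dimension $n^2 + (r-1)(n-1) - n = n^2 - n + (r-1)(n-1)$, and the same holds for $N_n^r = \overline{U}$. The equivalence now falls out: if $C_r(\N_n)$ is irreducible then the nonempty open set $U$ is dense, giving $N_n^r = C_r(\N_n)$; conversely, if $C_r(\N_n) = N_n^r$ then it is the closure of the irreducible set $U$, hence irreducible.

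The argument for $C_r(\g)$ is structurally identical, replacing $\calO_{\reg}$ by the nonderogatory locus in $\g$ and $z_{\nilreg}$ by $z(x_1) = k[x_1]$ of dimension $n$, which yields dimension $n^2 + (r-1)n$. The mild complication -- really the only point requiring care, and hence the main potential obstacle -- is that the nonderogatory locus is not a single $G$-orbit, so the orbit-map argument does not apply verbatim; this is circumvented by realizing the dense open locus where $x_1$ is regular semisimple as the image of the simultaneous-conjugation map from $G \times \mathfrak{t}_n^{\mathrm{rss}} \times \mathfrak{t}_n^{r-1}$ into $C_r(\g)$, and then deducing irreducibility of the full nonderogatory-first-entry locus by density in that locus.
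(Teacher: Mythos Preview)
Your argument is correct and follows essentially the same route as the paper: identify the open locus where the first entry is regular (resp.\ nonderogatory), realize it as the image of a morphism from an irreducible source, and invoke the standard fact that an irreducible variety equals the closure of any nonempty open subset. Your treatment is in fact slightly more detailed than the paper's---you compute the fiber of $\phi$ explicitly, and for $C_r(\g)$ you sketch an argument via the regular semisimple locus whereas the paper simply defers to \cite[Proposition~6]{GS:2000}---but the underlying strategy is identical.
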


\begin{proof}
We only give a proof for $C_r(\N_n)$ as that for $C_r(\g)$ is very similar and can be deduced from \cite[Proposition 6]{GS:2000}. First, one can easily see that $N^r_n$ is the closure of ${G\cdot(x_{\reg},z_{\nilreg},...,z_{\nilreg})}$. This variety is irreducible as it is the image of the morphism
\begin{align*}
G\times z_{\reg}^{r-1} &\to\N_n^r \\
(g,x_1,\ldots,x_{r-1})&\mapsto g\cdot(x_{\reg},x_1,\ldots,x_{r-1}).
\end{align*}
Conversely, consider the projection to the first factor
\[ p:C_r(\N_n)\to\N_n. \]
As the regular orbit $\calO_{\reg}$ is an open dense subset of $\N_n$, so is $p^{-1}(\calO_{\reg})=G\cdot(x_{\reg},z_{\nilreg},...,z_{\nilreg})$. As $C_r(\N_n)$ is irreducible, the first equality follows. Then we have
\[ \dim C_r(\N_n)=\dim\calO_{\reg}+(r-1)\dim(z_{\nilreg})=n^2-n+(r-1)(n-1).\]
This completes our proof.

\end{proof}

\subsection{} We can now establish the connection between the variety of nilpotent commuting $r$-tuples and that of general commuting $r$-tuples. We first consider a lemma related to $G^r_n$ and $N^r_n$.

\begin{lemma}\label{lemma of below}
For each $r\ge 2$, we have $N^r_n\subset G^r_n\cap\N_n^r$. Moreover, 
\[ N^r_n+(kI_n)^r\subset G^r_n.\]
\end{lemma}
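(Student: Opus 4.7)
The plan is to exploit the explicit description of $N^r_n$ from the proof of Theorem \ref{dim C_r(N)} together with two facts already recorded in Section \ref{notation}: every regular nilpotent matrix $x_{\reg}$ is nonderogatory, and every element of the form $x_{\reg}+cI_n$ with $c\in k$ is still nonderogatory. Let $U$ denote the locally closed subset $G\cdot(x_{\reg},z_{\nilreg},\ldots,z_{\nilreg})\subset C_r(\N_n)$, so that $N^r_n=\overline{U}$ by the Easy Theorem.

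For the first inclusion, $N^r_n\subset\N^r_n$ is automatic because $N^r_n\subset C_r(\N_n)\subset\N^r_n$. For $N^r_n\subset G^r_n$, every tuple in $U$ is a commuting $r$-tuple whose first coordinate is regular nilpotent, hence nonderogatory; thus $U$ lies inside the set whose closure defines $G^r_n$. Since $G^r_n$ is closed in $\g^r$, taking closures gives $N^r_n=\overline{U}\subset G^r_n$.

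For the second inclusion, I would introduce the morphism
\[
\phi:N^r_n\times k^r\longrightarrow \g^r,\qquad \phi\bigl((y_1,\ldots,y_r),(c_1,\ldots,c_r)\bigr)=(y_1+c_1 I_n,\ldots,y_r+c_r I_n).
\]
Since $I_n$ is central, $\phi$ preserves commutativity, so its image is exactly $N^r_n+(kI_n)^r\subset C_r(\g)$. On the dense subset $U\times k^r$, each image tuple has first coordinate of the form $g\cdot(x_{\reg}+c_1 I_n)g^{-1}$, which is nonderogatory by the second fact above; therefore $\phi(U\times k^r)\subset G^r_n$. Because $G^r_n$ is closed in $\g^r$, the preimage $\phi^{-1}(G^r_n)$ is closed in $N^r_n\times k^r$ and contains the dense subset $U\times k^r$, so $\phi^{-1}(G^r_n)=N^r_n\times k^r$. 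This yields the desired containment $N^r_n+(kI_n)^r\subset G^r_n$.

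There is essentially no serious obstacle: the argument is a two-step closure/density chase, with the only substantive input being the paper's own observation that regular nilpotent matrices (and their translates by scalars) are nonderogatory. The one point that requires a small amount of care is ensuring the density argument is applied in the product $N^r_n\times k^r$ rather than factor by factor, but this is immediate since $k^r$ is irreducible and $U$ is dense in $N^r_n$.
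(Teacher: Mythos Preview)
Your argument is correct and follows essentially the same approach as the paper. The only difference is packaging: for the second inclusion, the paper deduces it in one line from the first by noting that $G^r_n+(kI_n)^r=G^r_n$ (since translating a nonderogatory matrix by a scalar keeps it nonderogatory), whereas you rerun the density/closure argument via the explicit morphism $\phi$ on $U\times k^r$. Both routes rest on the same two facts from Section~\ref{notation} and are equivalent in substance.
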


\begin{proof}
As the open set $\calO_{\reg}=\N_n\cap\{\text{nonderogatory elements of}~\g\}$, we then have
\[ G\cdot(x_{\reg},z_{\nilreg},\ldots,z_{\nilreg})\subset G^r_n\cap\N_n^r. \]
It follows that $N^r_n\subset G^r_n\cap\N_n^r$. Therefore, $N^r_n+(kI_n)^r\subset G^r_n+(kI_n)^r=G^r_n$ since $x_{\reg}+kI_n$ is nonderogatory.
\end{proof}
 
\begin{theorem}\label{irreducibility of triples}
Suppose $n\ge 1$ is a number such that $C_r(\mathfrak{gl}_m)$ is irreducible for all $m<n$. If $C_r(\N_n)$ is irreducible then so is $C_r(\mathfrak{gl}_n)$.
\end{theorem}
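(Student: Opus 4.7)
The plan is to show $C_r(\g)=G_n^r$, which by Theorem~\ref{dim C_r(N)} is equivalent to irreducibility of $C_r(\g)$. I would fix an arbitrary commuting tuple $(y_1,\ldots,y_r)\in C_r(\g)$ and exhibit it as a limit of commuting tuples whose first coordinate is nonderogatory. The main tool is Jordan decomposition: write $y_i=s_i+n_i$ with $s_i$ semisimple and $n_i$ nilpotent. Since $s_i,n_i\in k[y_i]$, all of these commute pairwise, so the mutually commuting semisimples $s_1,\ldots,s_r$ can be simultaneously diagonalized, yielding a decomposition $k^n=\bigoplus_\beta V_\beta$ into joint eigenspaces on which each $s_i$ acts by a scalar $\lambda_i^{(\beta)}$. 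Every $y_l$ preserves each $V_\beta$.

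\emph{Single block case.} If all $s_i$ are scalar matrices, then $y_i=\lambda_i I+n_i$ with $n_i$ nilpotent, and the $n_i$'s commute, giving a tuple in $C_r(\N_n)=N_n^r$ (by the irreducibility hypothesis combined with Theorem~\ref{dim C_r(N)}). Lemma~\ref{lemma of below} then places $(y_1,\ldots,y_r)=(n_1,\ldots,n_r)+(\lambda_1 I,\ldots,\lambda_r I)$ in $G_n^r$.

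\emph{Multiple block case.} Each $V_\beta$ has dimension $n_\beta<n$, so by hypothesis $C_r(\frakgl(V_\beta))$ is irreducible and therefore equals $G^r_{n_\beta}$. I would approximate each restricted tuple $(y_1|_{V_\beta},\ldots,y_r|_{V_\beta})$ by commuting tuples $(z_1^{(k,\beta)},\ldots,z_r^{(k,\beta)})$ with $z_1^{(k,\beta)}$ nonderogatory on $V_\beta$, and then assemble block-diagonally $z_l^{(k)}:=\bigoplus_\beta z_l^{(k,\beta)}$. This yields a commuting sequence in $C_r(\g)$ converging to $(y_1,\ldots,y_r)$ whose first coordinate is nonderogatory on each block $V_\beta$.

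The main obstacle is that $z_1^{(k)}$ need not be globally nonderogatory on $k^n$: that requires the spectra of the blocks $z_1^{(k,\beta)}$ to be pairwise disjoint, and a priori they may coincide because the spectrum of $z_1^{(k,\beta)}$ collapses onto $\{\lambda_1^{(\beta)}\}$ as $k\to\infty$ and different $\beta$'s may share this value. The fix is to add a block-scalar correction $D^{(k)}:=\bigoplus_\beta \mu_\beta^{(k)} I_{V_\beta}$, where the $\mu_\beta^{(k)}$ are pairwise distinct, tend to $0$, and are chosen large relative to the blockwise spectral diameters of $z_1^{(k,\beta)}-\lambda_1^{(\beta)}I$. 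Since $D^{(k)}$ is central in the block algebra $\prod_\beta\End(V_\beta)$, it commutes with every $z_l^{(k)}$, so the perturbed tuple $(z_1^{(k)}+D^{(k)},z_2^{(k)},\ldots,z_r^{(k)})$ remains a commuting tuple, still converges to $(y_1,\ldots,y_r)$, and now has nonderogatory first coordinate. This places $(y_1,\ldots,y_r)$ in $G_n^r$, completing the argument.
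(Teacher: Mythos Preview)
Your approach is essentially the paper's: the dichotomy ``some $y_\ell$ has two eigenvalues'' versus ``every $y_i$ is scalar plus nilpotent'' is exactly the split you make via the joint eigenspace decomposition, and in both cases the block pieces are handled by the inductive hypothesis on $C_r(\frakgl_m)$ for $m<n$, while the single-block case invokes $C_r(\N_n)=N_n^r$ together with Lemma~\ref{lemma of below}. The paper simply asserts $G_m^r\times G_{n-m}^r\subset G_n^r$ (citing Han), whereas you spell out the block-scalar perturbation that makes the first coordinate globally nonderogatory; that is the same mechanism.

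One caution: your gluing step is phrased in sequential/metric language (``tend to $0$'', ``spectral diameters'', ``as $k\to\infty$''), which is not meaningful in the Zariski topology over an arbitrary algebraically closed field. The content is correct, but you should recast it algebraically: consider the morphism from $\mathbb{A}^1\times\prod_\beta G_{n_\beta}^r$ to $C_r(\g)$ sending $(t,(z^{(\beta)}))$ to the block-diagonal tuple with first coordinate shifted by $t\mu_\beta I_{V_\beta}$ (for fixed pairwise distinct $\mu_\beta$); its domain is irreducible, the image of a dense open subset has nonderogatory first coordinate, hence the whole image lies in $G_n^r$, and specializing $t=0$ recovers your tuple.
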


\begin{proof}
Consider a tuple $(x_1,\ldots,x_r)$ in $C_r(\frakgl_n)$. If there exists an $x_\ell$ with $1\le \ell\le r$ which  has at least 2 distinct eigenvalues then all $x_i$ can be decomposed into at least 2 blocks of sizes $m$ and $n-m$ with $m<n$. Hence, the assumption implies that $(x_1,\ldots,x_r)\in G_m^r\times G_{n-m}^r$ so that it is in $G_n^r$.\footnote{This argument is slightly modified from the proof of Lemma 2.4 in \cite{H:2005}} Otherwise, every $x_i$ would be of the following form 
\[\lambda_i I_n+y_i \]
where $\lambda_i\in k$ and $y_i\in\N_n$. In other words, each tuple $(x_1,\ldots,x_r)$ then would belong to the variety $C_r(\N_n+kI_n)$. This would therefore imply that 
\begin{align}\label{decompose}
 C_r(\frakgl_n)=G^r_n\cup C_r(\N_n+kI_n).
\end{align}
Note in addition that 
\[ C_r(\N_n+kI_n)= C_r(\N_n)+(kI_n)^r. \]
Hence if $C_r(\N_n)$ is irreducible, then $C_r(\N_n)=N_n^r$ by Theorem \ref{dim C_r(N)}. So we must have by Lemma \ref{lemma of below}
\[ C_r(\N_n+kI_n)=N^r_n+(kI_n)^r\subset G^r_n.\]
Hence \eqref{decompose} implies that $C_r(\frakgl_n)=G^r_n$, the irreducibility follows.
\end{proof}

Recall that $n_r$ (for $r>2$) is the least integer such that $C_r(\frakgl_n)$ is reducible. Then the theorem implies that $C_r(\N_{n_r})$ is also reducible. This doe not tell us much about the reducibility of $C_r(\N_n)$, however it does give some information on $n'_r$ as following

\begin{corollary}\label{existence of reducibility}
Suppose $r>2$ and let $n'_r$ be the least integer such that $C_r(\N_{n'_r})$ is reducible. Then we have $n'_3$ is in the interval $[4,29]$ and $n'_r=4$ for all $r>3$. 
\end{corollary}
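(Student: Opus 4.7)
The plan is to apply Theorem \ref{irreducibility of triples} contrapositively. First I would observe that at $n = n_r$, the very definition of $n_r$ as the smallest integer for which $C_r(\frakgl_n)$ is reducible guarantees that $C_r(\frakgl_m)$ is irreducible for every $m < n_r$. Hence the hypothesis of Theorem \ref{irreducibility of triples} is satisfied at $n = n_r$, and taking the contrapositive of the theorem forces $C_r(\N_{n_r})$ to be reducible as well. This immediately yields the upper bound $n'_r \le n_r$.

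Next I would invoke the result from \cite{N:2012} that $C_r(\N_n)$ is irreducible for every $n \le 3$, which supplies the matching lower bound $n'_r \ge 4$ (the least $n$ at which reducibility is even possible).

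Finally I would combine these two inequalities with the values of $n_r$ recorded in the introduction: the first author's paper \cite{Gu:1992} gives $n_r = 4$ for every $r \ge 4$, so the two bounds collapse to $n'_r = 4$ in that range, while for $r = 3$ the work of $\check{S}$ivic \cite{Si:2012} together with Han and Omladi\v{c} \cite{HO:2001} yields $n_3 \le 29$, giving $n'_3 \in [4, 29]$.

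There is really no obstacle here: the argument is a clean bookkeeping corollary of Theorem \ref{irreducibility of triples} combined with the small-$n$ irreducibility results already in the literature. The only point worth double-checking is that the ``least integer'' minimality cascades correctly through the contrapositive of Theorem \ref{irreducibility of triples}, but this is immediate from the definition of $n_r$ itself.
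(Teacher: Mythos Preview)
Your argument is correct and matches the paper's own proof: both derive $n'_r \le n_r$ by applying Theorem \ref{irreducibility of triples} at $n = n_r$ (where the minimality of $n_r$ guarantees the hypothesis), invoke the irreducibility of $C_r(\N_n)$ for $n \le 3$ to get $n'_r \ge 4$, and then substitute the known values of $n_r$. One small slip: the authors of \cite{HO:2001} are Holbrook and Omladi\v{c}, not Han and Omladi\v{c}.
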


\begin{proof}
The Theorem \ref{irreducibility of triples} shows that $n'_r\le n_r$. Note that since $n_3$ is in the interval $[11,29]$, $n'_3\le 29$. It is also known that $C_r(\N_n)$ is irreducible for $n\le 3$ and $r\ge 1$. It follows the result for $n'_3$. The other result also follows from the fact that $n_r=4$ for all $r\ge 4$.   
\end{proof}


\subsection{}\label{n,r>3} We have shown that $C_r(\N_4)$ is reducible for all $r\ge 4$. Now we would like to extend this result for higher rank $n$. To do so, we first need to prove below the analogs for results of Motzkin and Taussky, Guralnick and Sethuraman for nilpotent commuting matrices. The first one is the ``nilpotent version" of the first author in \cite[Theorem 1]{Gu:1992}.

\begin{proposition}\label{nilpotent version of MT}
Suppose $x,y\in\N_n$ with $[x,y]=0$. Let $\calA_{x,y}=k\left<x,y\right>$, the algebra generated by $x, y$. Then $\dim\calA\le n-1$.
\end{proposition}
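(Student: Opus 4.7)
The plan is to reduce the claim to the classical theorem of Gerstenhaber (via the cited Theorem 1 of \cite{Gu:1992}, and originally \cite{MT:1955}, \cite{G:1961}), which asserts that for any pair of commuting $n\times n$ matrices $x,y$ over $k$, the \emph{unital} commutative subalgebra $\calB_{x,y} := k[x,y]$ of $\frakgl_n$ they generate satisfies $\dim \calB_{x,y} \le n$. The present statement is the strict strengthening by one that arises when both generators are nilpotent.

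Since $x$ and $y$ commute, the non-unital algebra $\calA_{x,y} = k\langle x,y\rangle$ in the statement is spanned as a $k$-vector space by the monomials $x^i y^j$ with $i,j\ge 0$ and $(i,j)\ne (0,0)$, whereas $\calB_{x,y}$ is spanned by the same monomials together with $I_n = x^0 y^0$. Consequently $\calB_{x,y} = k\cdot I_n + \calA_{x,y}$. The key observation I would then record is that $\calA_{x,y}$ consists entirely of nilpotent matrices: each generator $x^i y^j$ with $i+j\ge 1$ is nilpotent because $x$ and $y$ are, and a $k$-linear combination of pairwise commuting nilpotent matrices is again nilpotent. In particular $I_n\notin \calA_{x,y}$, so the sum is in fact direct:
\[
\calB_{x,y} \;=\; k\cdot I_n \,\oplus\, \calA_{x,y}.
\]

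Taking dimensions and invoking Gerstenhaber's bound gives
\[
\dim \calA_{x,y} \;=\; \dim \calB_{x,y} - 1 \;\le\; n - 1,
\]
which is the desired inequality.

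The only substantive ingredient is the classical bound $\dim k[x,y]\le n$ for commuting pairs, which we quote without reproof; the rest is the straightforward verification that $\calA_{x,y}$ is a nilpotent ideal of $\calB_{x,y}$ of codimension exactly one. I do not anticipate any real obstacle: the nilpotence of $\calA_{x,y}$ is immediate from the commutativity of $x$ and $y$, so once Gerstenhaber's theorem is in hand, the improvement by $1$ is essentially automatic.
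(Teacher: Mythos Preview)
Your proof is correct and follows essentially the same approach as the paper: both reduce to the classical Gerstenhaber bound $\dim k[x,y]\le n$ and then observe that the non-unital algebra $\calA_{x,y}$ has codimension one in the unital one because $I_n\notin\calA_{x,y}$. The paper phrases this via the shift $(x,y)\mapsto (x+I_n,y)$ rather than comparing $\calA_{x,y}$ with $\calB_{x,y}$ directly, but the content is the same; if anything, your explicit justification that every element of $\calA_{x,y}$ is nilpotent (hence $I_n\notin\calA_{x,y}$) is a bit more careful than the paper's remark that ``$I_n$ is linearly independent with $x$ and $y$''.
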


\begin{proof}
Let take a pair $(x,y)$ satisfying the hypothesis, i.e., $(x,y)\in C_2(\N_n)$. Then we consider the new pair $(x+I_n,y)$ which is in $C_2(\g)$ since adding the identity matrix does not change the commutativity of $x$ with $y$. Recall from \cite[Theorem 1]{Gu:1992} that $\dim\calA_{x+I_n, y}\le n$. On the other hand, as $I_n$ is linearly independent with $x$ and $y$, we must have
 \[ \dim\calA_{x+I_n, y}=\dim\calA_{x, y}+1 \le n. \]
It follows that $\dim\calA_{x, y}\le n-1$, which completes our proof.
\end{proof}

In order to fit in our context, a generalization of the above result to $r$-tuples is necessary. This can be done by adapting the argument in \cite[Theorem 7]{GS:2000}. Then we obtain a ``nilpotent version" of it as follows.

\begin{theorem}\label{nilpotent version of GS result}
 For every $r$-tuple $(x_1,\ldots,x_r)$ in $N^r_n$ let $\calA=k\left<x_1,\ldots,x_r\right>$. Then we have $\calA$ is contained in an $(n-1)$-dimensional commutative subalgebra of $\g$. In particular, $\dim\calA\le n-1$.
\end{theorem}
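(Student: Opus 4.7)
The plan is to reduce this to the non-nilpotent analogue \cite[Theorem 7]{GS:2000} by exactly the shift trick used to establish Proposition \ref{nilpotent version of MT}. Given $(x_1,\ldots,x_r)\in N^r_n$, the shifted tuple $(x_1+I_n,x_2,\ldots,x_r)$ lies in $G^r_n$ by Lemma \ref{lemma of below}. Applying \cite[Theorem 7]{GS:2000} to the shifted tuple produces a unital commutative subalgebra $\calB\subseteq\g$ of dimension $n$ containing $x_1+I_n,x_2,\ldots,x_r$, together with $I_n$. Since $x_1=(x_1+I_n)-I_n\in\calB$, we conclude $\calA\subseteq\calB$.

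Next, every element of $\calA$ is a polynomial without constant term in the pairwise commuting nilpotent matrices $x_1,\ldots,x_r$, hence is itself nilpotent. Thus $\calA\subseteq\rad(\calB)$, the unique maximal nilpotent ideal of the Artinian commutative ring $\calB$. Decomposing $\calB\cong\calB_1\times\cdots\times\calB_s$ into its local factors with primitive idempotents $e_1,\ldots,e_s$ summing to $I_n$, we have $\dim\rad(\calB)=n-s$. I would then define
\[ \calC=\rad(\calB)\oplus ke_1\oplus\cdots\oplus ke_{s-1}. \]
The relations $e_ie_j=\delta_{ij}e_i$ together with $e_i\cdot\rad(\calB)\subseteq\rad(\calB)$ show that $\calC$ is closed under multiplication; it is commutative since it sits inside $\calB$, has dimension $(n-s)+(s-1)=n-1$, and contains $\rad(\calB)\supseteq\calA$. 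This supplies the required $(n-1)$-dimensional commutative subalgebra and in particular gives $\dim\calA\le n-1$.

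The conceptual core of the argument is the shift trick, which converts a nilpotent commuting tuple into one to which \cite[Theorem 7]{GS:2000} applies. The one point that requires care is that the $n$-dimensional commutative subalgebra produced by loc.\ cit.\ should be taken unital, so that $I_n\in\calB$ and one can recover $x_1$ from $x_1+I_n$; this is automatic in the standard construction of such a subalgebra as (a subalgebra of) the centralizer of a nonderogatory element. The idempotent bookkeeping used to squeeze the dimension from $n$ down to $n-1$ is routine but essential, since $\rad(\calB)$ itself may have dimension strictly less than $n-1$ when $\calB$ has several primitive idempotents.
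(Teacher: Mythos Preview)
Your argument is correct and in fact establishes more than the paper's own proof. The paper proceeds slightly differently: it first uses the inclusion $N^r_n\subset G^r_n$ from Lemma \ref{lemma of below} and applies \cite[Proposition 4]{GS:2000} directly to the original (unshifted) tuple $(x_1,\ldots,x_r)$ to get $\dim\calA\le n$; it then rules out $\dim\calA=n$ by contradiction via the shift trick, observing that the algebra generated by $(x_1+I_n,x_2,\ldots,x_r)$ would then have dimension $n+1$. This yields only the inequality $\dim\calA\le n-1$ and does not explicitly produce the containing $(n-1)$-dimensional commutative subalgebra asserted in the statement.

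You instead shift first, apply \cite[Theorem 7]{GS:2000} to the shifted tuple to obtain a unital $n$-dimensional commutative $\calB$, and then exploit the Artinian structure of $\calB$ to carve out an explicit $(n-1)$-dimensional subalgebra $\calC=\rad(\calB)\oplus ke_1\oplus\cdots\oplus ke_{s-1}$ containing $\calA$. The idempotent bookkeeping is a genuine addition over the paper's argument: it delivers the full conclusion of the theorem rather than only the dimension bound. Your caveat about taking $\calB$ unital is well placed and correctly resolved, since the $n$-dimensional subalgebras arising in \cite{GS:2000} are limits of centralizers $k[x]$ of nonderogatory elements and hence contain $I_n$.
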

\begin{proof}
First note that $N^r_n\subset G^r_n\cap\N^r_n$ from Lemma \ref{lemma of below}. Then suppose $(x_1,\ldots,x_r)\in N^r_n$, \cite[Proposition 4]{GS:2000} implies that $\dim\calA\le n$. In fact the dimension of $\calA$ must be less than or equal to $n-1$. For otherwise, assume that $\dim\calA=n$, then the algebra associated with $(x_1+I_n,x_2,\ldots,x_r)\in R$ would have dimension $n+1$ by the same argument in Proposition \ref{nilpotent version of MT}, which was a contradiction. This completes our theorem.  
\end{proof}

\begin{corollary}\label{}
If the variety $C_r(\N_n)$ is irreducible then any commutative subalgebra of $\g$ generated by $r$ elements in $C_r(\N_n)$ has dimension at most $n-1$. 
\end{corollary}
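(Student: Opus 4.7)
The plan is to combine the two main results established immediately above. Assume $C_r(\N_n)$ is irreducible and let $(x_1,\dots,x_r)$ be an $r$-tuple in $C_r(\N_n)$ generating a commutative subalgebra $\calA = k\langle x_1,\dots,x_r\rangle$ of $\g$.

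First, I would invoke Theorem \ref{dim C_r(N)}: under the irreducibility hypothesis, we have the equality $C_r(\N_n) = N_n^r$. This upgrades membership in $C_r(\N_n)$ to membership in the (a priori smaller) closed subvariety $N_n^r$, which is exactly the setting in which the preceding theorem applies. So our tuple $(x_1,\dots,x_r)$ lies in $N_n^r$.

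Second, I would apply Theorem \ref{nilpotent version of GS result} directly to this tuple: it tells us that $\calA = k\langle x_1,\dots,x_r\rangle$ is contained in an $(n-1)$-dimensional commutative subalgebra of $\g$, and in particular $\dim \calA \le n-1$, which is the desired conclusion.

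There is essentially no obstacle here; the corollary is a formal assembly of the two preceding results, and its purpose is to make the dimension bound on $\calA$ explicit as a necessary condition for irreducibility (to be used in the next subsection to deduce the reducibility of $C_r(\N_n)$ for $n, r \ge 4$ by exhibiting tuples whose generated subalgebra has dimension at least $n$).
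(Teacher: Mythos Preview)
Your proposal is correct and matches the paper's own proof, which simply states that the corollary follows immediately from Theorem~\ref{dim C_r(N)} and Theorem~\ref{nilpotent version of GS result}. Your write-up just makes the two-step logic explicit.
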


\begin{proof}
Follows immediately from Theorem \ref{dim C_r(N)} and Theorem \ref{nilpotent version of GS result} above.
\end{proof}

\subsection{The case $n, r\ge 4$}\label{n, r>4} We now apply our calculations in previous subsection to show the reducibility of $C_r(\N_n)$ for $n, r\ge 4$. 

Fix a positive integer $n$. Let $P$ be the parabolic subgroup of $G$ corresponding to the partition $[m, m]$ if $n=2m$, or $[m+1,m]$ if $n=2m+1$. Set $\fraku_P$ be the Lie algebra of the unipotent radical of $P$. More explicitly, every element of $\fraku_P$ is of the form
\[
\begin{pmatrix}
  0 & 0\\
  A & 0 
\end{pmatrix}
\]
where $0$ is the zero $m$ by $m$ matrix and $A$ is an $m$ by $m$ matrix (or $m$ by $m+1$ matrix if $n$ is odd). Observe that $\fraku_P$ is a commutative subalgebra of $\g$. Moreover, we have  
\[ \dim\fraku_P=
\begin{cases}
m^2 ~~&\text{if}~~~n=2m,\\
m(m+1) ~~&\text{if}~~~n=2m+1.
\end{cases}
\]
which is greater than $n-1$ if $n\ge 4$. Therefore, we obtain the main result of this section.

\begin{theorem}\label{reducibility of n,r>3}
The variety $C_r(\N_n)$ is reducible for all $n, r\ge 4$.
\end{theorem}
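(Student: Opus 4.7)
The plan is to proceed by contradiction. If $C_r(\N_n)$ were irreducible, then by the corollary to Theorem \ref{nilpotent version of GS result} every $r$-tuple $(x_1, \ldots, x_r) \in C_r(\N_n)$ would satisfy $\dim k\left<x_1, \ldots, x_r\right> \leq n-1$. The goal is therefore to exhibit a tuple in $C_r(\N_n)$ whose generated non-unital commutative subalgebra has dimension strictly greater than $n-1$.

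The first key observation is that $\fraku_P$ is not only commutative but in fact satisfies $\fraku_P \cdot \fraku_P = 0$ inside $\g$: the product of two matrices of the block shape $\begin{pmatrix} 0 & 0 \\ A & 0 \end{pmatrix}$ is manifestly zero by the block-multiplication formula. Consequently, for any tuple $(u_1, \ldots, u_r) \in \fraku_P^r$ the non-unital algebra $k\left<u_1, \ldots, u_r\right>$ collapses to the linear span $\mathrm{span}_k(u_1, \ldots, u_r)$, and every such tuple automatically lies in $C_r(\N_n)$ because its entries are pairwise commuting nilpotent matrices.

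Since $\dim \fraku_P > n-1$ for $n \geq 4$, I would then select elements $u_1, \ldots, u_r \in \fraku_P$ whose span fills out $\fraku_P$. When $r \geq \dim \fraku_P$, a basis of $\fraku_P$ padded with the zero matrix (which is nilpotent and commutes with everything) does the job: the generated algebra coincides with $\fraku_P$ itself, whose dimension strictly exceeds $n-1$. This contradicts the corollary and forces $C_r(\N_n)$ to be reducible.

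The main technical obstacle is the residual range $4 \leq r < \dim \fraku_P$, in which any $r$-tuple from $\fraku_P^r$ spans a subspace of dimension at most $r$ and the direct algebra argument above no longer delivers a subalgebra of dimension bigger than $n-1$. Here one would pass instead to the irreducible closed subvariety $V_P = \overline{G \cdot \fraku_P^r}$ of $C_r(\N_n)$ and argue that $V_P \not\subseteq N_n^r$: the $G$-invariant equation $x_1^2 = 0$ holds throughout $V_P$ (because $u^2 = 0$ for every $u \in \fraku_P$), whereas the dense open subset of $N_n^r$ consisting of tuples with regular first coordinate has $x_1^2 \neq 0$ for $n \geq 3$. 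Combined with the dimension estimate for $V_P$ developed later in Proposition \ref{dim subvariety}, this yields a second irreducible component of $C_r(\N_n)$ distinct from $N_n^r$ and completes the proof.
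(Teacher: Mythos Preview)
Your opening two paragraphs match the paper's one-line argument exactly: assume irreducibility, invoke Theorem~\ref{nilpotent version of GS result} (via its corollary), and contradict it using the commutative nilpotent subalgebra $\fraku_P$ of dimension $\lfloor n^2/4\rfloor>n-1$. The paper stops there; you go further and correctly observe that, because $\fraku_P\cdot\fraku_P=0$, an $r$-tuple drawn from $\fraku_P$ generates only its linear span, so the contradiction with $\dim\calA\le n-1$ is immediate only once $r\ge n$ (your bound $r\ge\dim\fraku_P$ is needlessly strong but harmless).

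The genuine problem is your proposed repair for the residual range $4\le r<n$. From ``$x_1^2=0$ holds on $V_P$'' together with ``$x_1^2\neq 0$ on a dense open of $N_n^r$'' you conclude $V_P\not\subseteq N_n^r$. That inference is backwards: those two facts only yield $N_n^r\not\subseteq V_P$. Nothing prevents $V_P$ from sitting inside the proper closed locus $\{x_1^2=0\}\cap N_n^r$. Indeed, run your argument verbatim for $n=3$: elements of $\fraku_P$ still square to zero and $x_{\reg}^2\neq 0$, yet $C_r(\N_3)$ is known to be irreducible, so $V_P\subseteq N_3^r$ after all. Thus the $x_1^2=0$ step cannot separate $V_P$ from $N_n^r$.

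Appealing to Proposition~\ref{dim subvariety} does not close the gap either: the inequality $\dim V_P>\dim N_n^r$ from Theorem~\ref{n>3, r>5} only holds for $n>7$ with $r\ge 4$, or for $n\ge 4$ with $r\ge 6$. Combined with the direct algebra argument (which needs $r\ge n$), the pairs $(n,r)\in\{(5,4),(6,4),(6,5),(7,4),(7,5)\}$ remain untreated by your proposal. The paper's own proof does not spell out how these cases are handled, so you have not introduced a new error relative to the paper---but your attempted patch via $x_1^2=0$ is invalid, and the proof as written is incomplete precisely on that residual range.
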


\begin{proof}
It immediately follows from Theorem \ref{nilpotent version of GS result} and the above observation.
\end{proof}

This result not only strengthens the Corollary \ref{existence of reducibility} earlier but also confirms the statement that the reducibility behavior of the variety of nilpotent commuting matrices is similar to that of the variety of general commuting matrices in the case $n, r\ge 4$. Another analogy between these two varieties we can see is the following

\begin{conjecture}
If $C_r(\frakgl_n)$ is irreducible then so is $C_r(\N_n)$.
\end{conjecture}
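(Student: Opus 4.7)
Plan. By Theorem \ref{dim C_r(N)}, the conjecture becomes the statement: if $C_r(\frakgl_n) = G^r_n$, then $C_r(\N_n) = N^r_n$. Under this hypothesis one has
\[ C_r(\N_n) = C_r(\frakgl_n) \cap \N_n^r = G^r_n \cap \N_n^r, \]
and Lemma \ref{lemma of below} gives $N^r_n \subseteq G^r_n \cap \N_n^r$; it therefore suffices to prove the reverse inclusion. Equivalently, one must show that every tuple of commuting nilpotent matrices arising as a limit of tuples with nonderogatory first coordinate is in fact a limit of tuples with \emph{regular nilpotent} first coordinate.

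The first step would be to take $(x_1, \ldots, x_r) \in G^r_n \cap \N_n^r$ and choose an approximating curve $(y_1(t), \ldots, y_r(t)) \in G^r_n$ with $y_1(t)$ nonderogatory for $t \neq 0$ and $\lim_{t \to 0}(y_1(t), \ldots, y_r(t)) = (x_1, \ldots, x_r)$. Replacing each $y_i(t)$ by $y_i(t) - (\Tr(y_i(t))/n) I_n$ preserves commutativity, preserves the nonderogatory property of $y_1(t)$ (since translation by a scalar does not alter the centralizer), and preserves the limit (since every $x_i$ is trace-zero); hence we may assume the curve lies in $\fraksl_n^r$. The crucial step is then to modify this curve, without changing its limit, into one whose first coordinate is regular nilpotent---at which point the tuple lies in the orbit $G \cdot (x_{\reg}, z_{\nilreg}, \ldots, z_{\nilreg})$ and the limit $(x_1, \ldots, x_r)$ falls in $N^r_n$ as required.

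To carry out this modification, I would work over $k((t))$ and use the Jordan--Chevalley decomposition $y_1(t) = s(t) + n(t)$, perhaps after a ramified base change; since $y_1(t)$ is nonderogatory, both $s(t)$ and $n(t)$ lie in $k[y_1(t)]$ and hence commute with every $y_i(t)$. Because the characteristic polynomial of $y_1(t)$ specializes to $T^n$, all eigenvalues of $s(t)$ tend to zero. The idea is then to use a suitably chosen $k((t))$-valued cocharacter of $G$, built from the eigenvalue-grading of $y_1(t)$, to rescale the curve so that the semisimple directions are absorbed into the degeneration while the nilpotent direction survives and becomes regular in the limit. The hardest part will be controlling this rescaling: the Jordan decomposition is not continuous at $t = 0$, and the Jordan-type of $x_1$ may be strictly smaller than the generic Jordan-type along the curve, so the cocharacter must be chosen compatibly across all $r$ coordinates while still preserving the limit $(x_1, \ldots, x_r)$. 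I expect this would require an induction on the partition-type of $x_1$, or a Springer-theoretic treatment via the Grothendieck simultaneous resolution, and that any successful proof will ultimately hinge on establishing equidimensionality of the nilpotent slice $G^r_n \cap \N_n^r$.
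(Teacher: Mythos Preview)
This statement is a \emph{Conjecture} in the paper, not a theorem; the paper offers no proof. What the paper does say, immediately after stating the conjecture, is precisely the reduction you identify: the conjecture would follow from the equality $N^r_n = G^r_n \cap \N_n^r$, of which only the inclusion $N^r_n \subseteq G^r_n \cap \N_n^r$ is established (Lemma~\ref{lemma of below}). So your plan and the paper's discussion agree on the target, and the paper simply leaves the reverse inclusion as a claim without argument.

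Your outline of how one might attack the reverse inclusion is reasonable in spirit but does not constitute a proof, and you yourself flag the essential gap: the ``rescaling by a cocharacter'' step is not carried out, and there is no mechanism given that simultaneously forces the first coordinate to degenerate to a \emph{regular} nilpotent while leaving the limit of the entire $r$-tuple unchanged. The Jordan--Chevalley decomposition over $k((t))$ and the observation that the eigenvalues of $s(t)$ tend to zero do not by themselves produce such a cocharacter; in general, conjugating by a one-parameter family that kills $s(t)$ will also move the other $y_i(t)$, and there is no reason their limits should remain $x_i$. This is exactly why the statement is recorded as a conjecture rather than a theorem. A minor additional point: the step where you replace $y_i(t)$ by $y_i(t) - (\Tr y_i(t)/n) I_n$ presumes that $n$ is invertible in $k$, which is not part of the paper's standing hypotheses.
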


There are many evidences for this statement to be true such as when $r=2$, or $n=2, 3$ (for which they are both irreducible), or $n, r\ge 4$ (for which they are both reducible). In other words, the only case where this is significant is when $r=3$. It is obvious that the conjecture follows immediately from the equality $N^r_n=G^r_n\cap\N^r_n$ for each $n, r\ge 2$. One inclusion was shown in Lemma \ref{lemma of below}, we claim that the other one is also true.   

\section{Lower bound for the dimension of commuting varieties}\label{lower bound}

In this section we explore further on the dimensions of $C_r(
\N_n)$ and $C_r(\g)$. Recall from the Theorem \ref{dim C_r(N)} that the lower bound for $\dim C_r(\N_n)$ is $n^2-n+(r-1)(n-1)$ and  for $C_r(\g)$ is $n^2+(r-1)n$. We shall increase these bounds for the case when $n$ and $r$ are sufficiently large. 

\subsection{} We first do so for $C_r(\N_n)$. Let $V_P=G\cdot \fraku^r_P$. By Lemma 8.7(c) in \cite{Jan:2004}, we know that $V_P$ is a closed variety of $\g^r$. Moreover, the variety $V_P$ is a subset of $C_r(\N_n)$ since $\fraku_P^r$ is commutative. 

We begin with the dimension of this variety.
 
\begin{proposition}\label{dim subvariety}
For each $r\ge 1$, we have
\[ \dim V_P=
\begin{cases}
(r+1)m^2 ~~&\text{if}~~~~~n=2m,\\
(r+1)m(m+1) ~~&\text{if}~~~~~n=2m+1.
\end{cases}
\]
\end{proposition}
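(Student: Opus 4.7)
My plan is to realize $V_P$ as the image of a morphism from an irreducible variety of dimension $(r+1)\dim\fraku_P$, and to verify that the morphism is generically finite by exhibiting one fiber of the right dimension.

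Consider the morphism
\[
\varphi : G \times \fraku_P^r \to \g^r, \quad (g,(y_1,\ldots,y_r)) \mapsto (gy_1g^{-1},\ldots,gy_rg^{-1}),
\]
whose image is $V_P$ by definition. The source is irreducible of dimension $n^2 + r\dim\fraku_P$, so $V_P$ is irreducible and
\[
\dim V_P = n^2 + r\dim\fraku_P - \delta,
\]
where $\delta$ is the dimension of the general fiber. The fiber over $\mathbf{x} = (x_1,\ldots,x_r)\in\fraku_P^r$ projects isomorphically onto
\[
F_{\mathbf{x}} := \{g \in G : g^{-1}x_i g \in \fraku_P \text{ for every } i\},
\]
and since $P$ normalizes $\fraku_P$ we always have $P \subseteq F_{\mathbf{x}}$, giving $\delta \geq \dim P$. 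Using $\dim G/P = \dim\fraku_P$, which equals $m^2$ when $n=2m$ and $m(m+1)$ when $n=2m+1$, this produces the upper bound $\dim V_P \leq (r+1)\dim\fraku_P$ that matches the claim.

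For the matching lower bound it suffices to exhibit one $\mathbf{x}_0$ with $\dim F_{\mathbf{x}_0} \leq \dim P$, since semicontinuity of fiber dimension then passes this bound to the generic fiber. Take $\mathbf{x}_0 = (x_1, 0, \ldots, 0)$ with $x_1 = \begin{pmatrix} 0 & 0 \\ A & 0 \end{pmatrix}$, choosing $A = I_m$ when $n = 2m$ and $A = (I_m\ 0)$ when $n = 2m+1$. Writing $g = \begin{pmatrix} a & b \\ c & d \end{pmatrix}$ in the block form matching $P$, the requirement $x_1 g = g u$ with $u = \begin{pmatrix} 0 & 0 \\ A' & 0 \end{pmatrix} \in \fraku_P$ splits into the three block equations $Ab = 0$, $bA' = 0$, and $Aa = dA'$; substituting $A' = d^{-1}Aa$ for invertible $d$ reduces this to $Ab = 0$ together with $b d^{-1}Aa = 0$. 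In the even case $A = I_m$ forces $b = 0$ outright, so $F_{\mathbf{x}_0} = P$. In the odd case $\ker A$ is one-dimensional, so $b$ must have the form $e_{m+1}w^T$ with $w \in k^m$; the remaining condition $w^T d^{-1}Aa = 0$ is vacuous at $w = 0$ and imposes $m+1$ independent linear conditions on $a$ whenever $w \neq 0$. A parameter count then shows that the locus $\{g \in F_{\mathbf{x}_0} : b \neq 0\}$ has dimension at most $\dim P - 1$, so $P$ is the unique top-dimensional component of $F_{\mathbf{x}_0}$, and $\dim F_{\mathbf{x}_0} = \dim P$.

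Combining the two bounds yields $\dim V_P = (r+1)\dim\fraku_P$, equal to $(r+1)m^2$ when $n = 2m$ and $(r+1)m(m+1)$ when $n = 2m+1$, as claimed. The principal obstacle is the odd-case parameter count: one must confirm that the extra locus in $F_{\mathbf{x}_0}$ strictly drops dimension. Should this estimate become awkward, one can work instead with a tuple $\mathbf{x}_0$ having two nonzero entries whose associated kernels intersect trivially, which forces $b = 0$ outright and reduces the odd case to the same clean argument as the even one.
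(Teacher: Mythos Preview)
Your approach differs from the paper's. The paper projects $V_P$ onto its first coordinate $G\cdot\fraku_P$, invokes Richardson--orbit theory (Jantzen, \S8.8) to get $\dim G\cdot\fraku_P=2\dim\fraku_P$, and then observes that the fiber over a Richardson element $g\cdot v$ is exactly $g\cdot(v,\fraku_P,\ldots,\fraku_P)\cong\fraku_P^{r-1}$; adding up gives $(r+1)\dim\fraku_P$. Your route---bounding the generic fiber of $G\times\fraku_P^r\to V_P$ between $\dim P$ and $\dim F_{\mathbf{x}_0}$---is more hands-on and avoids quoting Richardson theory, at the cost of the matrix case analysis. Both arguments secretly rest on the same fact (that the centralizer of a Richardson element lies in $P$), so the trade-off is really ``cite a standard result'' versus ``recompute it in coordinates''.

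Your odd-case analysis is cleaner than you think, but as written it has a gap: you only treat $d$ invertible, and your fallback of using two nonzero entries fails when $r=1$. In fact the locus $\{g\in F_{\mathbf{x}_0}:b\neq 0\}$ is \emph{empty}. From $Ab=0$ the first $m$ rows of $b$ vanish, so rows $1,\ldots,m$ of $g$ are $(\text{row}_i\,a\mid 0)$. If $d$ is invertible, your relation $w^Td^{-1}Aa=0$ with $w\neq 0$ gives a nonzero $v=w^Td^{-1}$ with $v\cdot Aa=0$; if $d$ is singular, take any nonzero $v$ with $vd=0$ and use $v(Aa)=v(dA')=0$. Either way a nontrivial combination of the first $m$ rows of $g$ vanishes, contradicting $g\in GL_n$. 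Hence $F_{\mathbf{x}_0}=P$ in the odd case as well, and no backup is needed.
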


\begin{proof}
Note first that $V_P$ is irreducible as the moment morphism $G\times\fraku^r_P\to G\cdot\fraku_P^r$ is surjective. Now we consider the projection 
\[ p:G\cdot\fraku_P^r\to V_P \]
Let $\calO=G\cdot v$ be the Richardson orbit corresponding to $P$. Then we have $\calO$ is an open dense subset of $G\cdot\fraku_P$ \cite[Lemma 8.8(a)]{Jan:2004}. Combining with the formula (2) in \cite[8.8]{Jan:2004}, we obtain that 
\begin{equation}\label{dim Gu_P}
\dim G\cdot\fraku_P = \dim(\calO) = \dim(G/P)+\dim\fraku_P^r =2\dim\fraku_P^r.
\end{equation}
As each fiber $p^{-1}(g\cdot v)=g(v,\fraku_P,\ldots,\fraku_P)\cong\fraku_P^{r-1}$ with $g\in G$ is of dimension $(r-1)\dim\fraku_P$, we have
\[ \dim G\cdot\fraku_P^r=\dim G\cdot\fraku_P+\dim p^{-1}(g\cdot v)=(r+1)\dim\fraku_P^r \]
Finally the result follows from the dimension of $\fraku_P$.
\end{proof}

The computation above implies that the dimension of $V_P$ is greater than that of $N_n^r$ for sufficiently large $n$ and $r$. In particular, we have the following.

\begin{theorem}\label{n>3, r>5}
If $n>3$ and $r\ge 6$, or $n>7$ and $r>3$, then we have $\dim V_P>\dim N_n^r$.
\end{theorem}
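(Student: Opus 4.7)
The plan is a direct dimension comparison. By Theorem \ref{dim C_r(N)}, the irreducible variety $N_n^r$ has dimension $(n-1)(n+r-1) = n^2 - n + (r-1)(n-1)$, and by Proposition \ref{dim subvariety} the dimension of $V_P$ is $(r+1)m^2$ when $n=2m$ and $(r+1)m(m+1)$ when $n=2m+1$. So the statement reduces to checking two explicit polynomial inequalities, one per parity of $n$, in each of the two claimed parameter regimes.

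I would then handle the two parities separately. For $n=2m$, the inequality $\dim V_P > \dim N_n^r$ rearranges to a quadratic in $m$ with leading coefficient $r-3$, namely
\[
(r-3)m^2 - 2(r-2)m + (r-1) > 0.
\]
The discriminant of this quadratic turns out to be a constant ($4$), so the roots are cleanly given by $m=1$ and $m = (r-1)/(r-3) = 1 + 2/(r-3)$; since $r-3 > 0$ for $r\ge 4$, the inequality holds precisely for $m$ beyond this larger root. For $n=2m+1$, after dividing through by $m$ the inequality collapses to $m(r-3) > r-1$, yielding the \emph{same} threshold $m > 1 + 2/(r-3)$.

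With this threshold in hand, the remaining step is merely to check that both hypothesis regimes lie strictly inside the valid region. For $r\ge 6$ the threshold is strictly below $2$, so $m\ge 2$ suffices, which corresponds to $n\ge 4$ (even case) or $n\ge 5$ (odd case); this covers $n>3$. For $r\in\{4,5\}$ the threshold is at most $3$, so $m\ge 4$ suffices, covering $n>7$ in both parities.

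I do not foresee any serious obstacle here: the whole argument is an elementary quadratic analysis once $\dim V_P$ and $\dim N_n^r$ are written out explicitly. The only point demanding a bit of care is confirming that the \emph{strict} inequality holds at the boundary cases $(n,r) = (4,6)$, $(8,4)$ and $(9,4)$, since the nearby excluded cases $(6,4)$, $(7,4)$ and $(4,5)$ give equality — this is what forces the hypotheses to take the somewhat asymmetric form stated in the theorem.
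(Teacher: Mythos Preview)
Your proposal is correct and follows essentially the same approach as the paper: both reduce the statement to the explicit polynomial inequalities in the two parity cases and verify them by elementary algebra. The only cosmetic difference is that the paper solves the inequality for $r$ (obtaining $r \le 3 + 4/(n-2)$ in the even case and $r \le 3 + 4/(n-3)$ in the odd case) whereas you solve for $m$, arriving at the unified threshold $m > 1 + 2/(r-3)$.
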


\begin{proof}
We prove by contradiction. From Propositions \ref{dim C_r(N)} and \ref{dim subvariety}, it is equivalent to setting up
\[ \frac{(r+1)n^2}{4}\le n^2-n+(r-1)(n-1)\quad\text{if $n$ is even}\]
or 
\[ \frac{(r+1)(n^2-1)}{4}\le n^2-n+(r-1)(n-1)\quad\text{if $n$ is odd}.\]
The last two inequalities can be rewritten as follows
\begin{align*}
r & \le \frac{3n^2-8n+4}{(n-2)^2}=3+\frac{4}{n-2} \quad\text{if $n$ is even}, \\
r & \le \frac{3n^2-8n+5}{n^2-4n+3} = 3+\frac{4}{n-3} \quad\text{if $n$ is odd}.
\end{align*}
It is not true if $n>3$ and $r>5$, or $n>7$ and $r>3$.
\end{proof}

\begin{corollary}\label{main result of lower bound}
For every $n, r\ge 2$, we have $dim C_r(\N_n)\ge (r+1)\dim\fraku_P$.
\end{corollary}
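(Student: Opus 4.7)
The plan is to recognize that Corollary \ref{main result of lower bound} follows directly from the two ingredients already assembled in this subsection: the containment $V_P\subseteq C_r(\N_n)$ and the dimension formula for $V_P$. First I would verify the containment: because $\fraku_P$ is an abelian Lie subalgebra of $\g$ consisting entirely of nilpotent matrices, every tuple in $\fraku_P^r$ is pairwise commuting and nilpotent, so $\fraku_P^r\subseteq C_r(\N_n)$; the diagonal $G$-action on $\g^r$ preserves both pairwise commutativity and nilpotency, hence $V_P=G\cdot\fraku_P^r$ is a $G$-stable subset of $C_r(\N_n)$. Its closedness in $\g^r$ is already recorded via \cite[Lemma~8.7(c)]{Jan:2004} at the start of the subsection.

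The second and final step is to invoke Proposition \ref{dim subvariety}, which yields $\dim V_P=(r+1)\dim\fraku_P$ for all $r\ge 1$, together with the elementary fact that the dimension of a variety is bounded below by the dimension of any closed subvariety. This gives $\dim C_r(\N_n)\ge\dim V_P=(r+1)\dim\fraku_P$ for every $n,r\ge 2$, which is exactly the statement of the corollary.

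There is essentially no obstacle to the corollary itself; the real content has already been absorbed into Proposition \ref{dim subvariety}, where the combination of the Richardson orbit dimension $\dim G\cdot\fraku_P=2\dim\fraku_P$ and the fiber-dimension calculation for the projection $G\cdot\fraku_P^r\to G\cdot\fraku_P$ produced the factor $(r+1)$. What deserves emphasis in the write-up is the comparison with the previous estimate: combined with Theorem \ref{dim C_r(N)} this yields
\[ \dim C_r(\N_n)\ge\max\bigl\{(r+1)\dim\fraku_P,\; n^2-n+(r-1)(n-1)\bigr\}, \]
and Theorem \ref{n>3, r>5} pinpoints precisely the ranges $(n>3,r\ge 6)$ and $(n>7,r>3)$ in which the first term strictly dominates. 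Thus the corollary is where the new information beyond the $N_n^r$-component bound is recorded, and in the remaining small ranges it reduces to (or is weaker than) the old bound and the statement is still true but uninformative.
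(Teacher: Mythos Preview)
Your proposal is correct and follows exactly the approach the paper intends: the corollary is left without an explicit proof precisely because it is an immediate consequence of the containment $V_P\subseteq C_r(\N_n)$ (noted at the start of the subsection) together with the dimension formula $\dim V_P=(r+1)\dim\fraku_P$ from Proposition~\ref{dim subvariety}. Your additional commentary about the comparison with the $N_n^r$-bound is accurate and helpful but goes beyond what the paper records.
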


For most values of $n$ and $r$, we claim that $V_P$ is an irreducible component of $C_r(\N_n)$.

\begin{conjecture}
The variety $V_P$ is an irreducible component of $C_r(\N_n)$ for all $n, r\ge 4$. 
\end{conjecture}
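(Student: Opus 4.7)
The plan is to prove the conjecture by showing that at a sufficiently generic point $x\in V_P$, the Zariski tangent space $T_xC_r(\N_n)$ has dimension exactly $\dim V_P = (r+1)\dim\fraku_P$. Since $V_P$ is closed and irreducible of this dimension by Proposition~\ref{dim subvariety}, this will force $C_r(\N_n)$ to be smooth at $x$ of dimension $\dim V_P$, whence the unique irreducible component of $C_r(\N_n)$ through $x$ must be $V_P$ itself, establishing $V_P$ as an irreducible component.

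To set things up, choose $v\in\fraku_P$ a Richardson element (of Jordan type $[2^m]$ when $n=2m$, and $[2^m,1]$ when $n=2m+1$) together with $u_1,\ldots,u_{r-1}\in\fraku_P$ generic, and set $x=(v,u_1,\ldots,u_{r-1})$. The key structural observation is that every element of $\fraku_P$ squares to zero, so the nilpotency equations $\Tr(y^k)=0$ for $k=1,\ldots,n$ contribute only two independent linearized conditions per coordinate of $x$. Combined with the linearized commuting relations $[\dot y_i,y_j]+[y_i,\dot y_j]=0$ (with the convention $y_0=v$), these define $T_xC_r(\N_n)\subset\g^r$. Using the block decomposition $\g=\frakl\oplus\fraku_P\oplus\fraku_P^-$ into the Levi, the unipotent radical, and the opposite unipotent radical, split each tangent vector $\dot y_i\in\g$ accordingly. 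The $\fraku_P$-components of the $\dot y_i$'s are automatically unconstrained by commutativity (since $\fraku_P$ is abelian), which immediately contributes $r\dim\fraku_P$ free directions to $T_xC_r(\N_n)$.

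The crux is then to analyze the $\frakl\oplus\fraku_P^-$-components. One expects the linearized commuting relations, together with the two nilpotency conditions per coordinate, to force these components into the $(\dim G/P=\dim\fraku_P)$-dimensional subspace arising from the infinitesimal $G$-action on $V_P$, giving a total tangent dimension of $(r+1)\dim\fraku_P$ as required. The main obstacle is precisely this rank computation: the linearized commuting system is large and highly coupled, and one needs a sharp genericity hypothesis on $v$ and the $u_i$'s to ensure the system achieves exactly the expected rank. This is especially delicate in the regime $r=4$ with $n$ large, where only three commuting equations involve $v$ and they must suffice to pin down the $\fraku_P^-$-directions; handling this may require an auxiliary argument in the spirit of Premet's centralizer analysis or a case-by-case study organized by the Jordan structure of a generic element of $\fraku_P$, and in boundary cases it may be necessary to strengthen the genericity hypothesis or even restrict the range of $(n,r)$ slightly to make the argument go through.
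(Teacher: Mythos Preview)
The statement you are attempting to prove is labelled a \emph{Conjecture} in the paper; the authors do not supply a proof, so there is nothing to compare your argument against. What you have written is therefore not a reproof but an attempt to settle an open problem, and it should be assessed on those terms.

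As it stands, your proposal is a strategy outline rather than a proof. You correctly identify the tangent-space criterion: if $\dim T_xC_r(\N_n)=\dim V_P$ at a generic $x\in V_P$, then $V_P$ is an irreducible component. You also correctly observe that the $\fraku_P$-components of the $\dot y_i$ are unconstrained, contributing $r\dim\fraku_P$ directions, and that at a point with $y_i^2=0$ only the linearisations of $\Tr(y)$ and $\Tr(y^2)$ survive. But the entire content of the argument lies in the step you label ``the crux'' and then do not carry out. Your own final paragraph concedes that the rank computation is unresolved, may require Premet-style auxiliary arguments, and may force you to ``restrict the range of $(n,r)$''. A proof that might only work after shrinking the range of $(n,r)$ is not a proof of the conjecture as stated.

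There is also a structural concern you do not address. Linearising the defining equations $\Tr(y_i^k)=0$ and $[y_i,y_j]=0$ computes the Zariski tangent space of the \emph{scheme} cut out by those equations, not of the reduced variety $C_r(\N_n)$. Your own count already shows that $T_{y_i}\N_n$ has dimension $n^2-2$ at a square-zero matrix, far larger than $\dim\N_n=n^2-n$; the commuting relations must absorb all of this excess for the approach to succeed. If the scheme happens to be non-reduced along $V_P$, the tangent space you compute will be strictly larger than $\dim V_P$ even when $V_P$ \emph{is} an irreducible component of the underlying variety, and your method would then be inconclusive. You would need either to prove generic reducedness along $V_P$ first, or to replace the tangent-space argument by a direct maximality argument for $V_P$ among irreducible closed subsets.
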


\subsection{} Now the new lower bound for $\dim C_r(\g)$ can be obtained by slightly modifying the arguments in previous subsection. Indeed, let $\frakb_P=\fraku_P+k I_n$ and then we have $G\cdot\frakb_P=G\cdot\fraku_P+kI_n$ so $\dim G\cdot\frakb_P=2\dim\fraku_P+1$ by \eqref{dim Gu_P}. Consider $V=G\cdot\frakb_P^r$. The theorem on dimension of fibers then gives us
\[ \dim V=\dim G\cdot\frakb_P+\dim\frakb_P^{r-1}=(r+1)\dim\fraku_P+r. \]
This establishes a new lower bound for the dimension of $C_r(\g)$ as follows.

\begin{theorem}\label{lower bound for C_r(g)}
The dimension of $C_r(\mathfrak{gl}_n)$ is $\ge (r+1)\dim\fraku_P+r$.
\end{theorem}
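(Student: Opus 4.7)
The plan is to copy the structure of Proposition \ref{dim subvariety}, but with $\fraku_P$ replaced by the slightly larger commutative subalgebra $\frakb_P=\fraku_P+kI_n$, and then use the resulting closed subvariety $V=G\cdot\frakb_P^r$ of $C_r(\g)$ to read off a lower bound. The inclusion $V\subset C_r(\g)$ is immediate: since $I_n$ is central in $\g$, the algebra $\frakb_P$ is commutative, so $\frakb_P^r\subset C_r(\g)$, and $G$-conjugation preserves commutativity. Closedness of $V$ in $\g^r$ is automatic by the same application of \cite[Lemma 8.7(c)]{Jan:2004} used to produce $V_P$. Thus the whole content of the theorem is the dimension equality $\dim V=(r+1)\dim\fraku_P+r$.

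The first substantive step is the computation of $\dim G\cdot\frakb_P$. Because $kI_n$ is $G$-fixed, we have $G\cdot\frakb_P=G\cdot\fraku_P+kI_n$, and the map $(y,\lambda)\mapsto y+\lambda I_n$ from $G\cdot\fraku_P\times kI_n$ to $G\cdot\frakb_P$ is a bijection onto its image (the scalar part is recovered from $\tfrac{1}{n}\Tr$). Combined with equation (\ref{dim Gu_P}), this gives $\dim G\cdot\frakb_P=2\dim\fraku_P+1$.

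The second step is the fiber computation via the projection $p\colon V\to G\cdot\frakb_P$ to the first factor. For a Richardson element $v\in\fraku_P$ with $G$-orbit open in $G\cdot\fraku_P$, the orbit of $v+\lambda I_n$ is open in $G\cdot\frakb_P$; the fiber $p^{-1}(g\cdot(v+\lambda I_n))$ contains the translate $g\cdot(\{v+\lambda I_n\}\times\frakb_P^{r-1})$ of dimension $(r-1)\dim\frakb_P=(r-1)(\dim\fraku_P+1)$. By irreducibility of $V$ (it is the image of $G\times\frakb_P^r$) and the fiber-dimension theorem, combining the two pieces yields
\[
\dim V=\dim G\cdot\frakb_P+(r-1)\dim\frakb_P=(2\dim\fraku_P+1)+(r-1)(\dim\fraku_P+1)=(r+1)\dim\fraku_P+r,
\]
which is the claimed bound since $V\subset C_r(\g)$.

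I do not expect any genuine obstacle: the argument is essentially a translation of Proposition \ref{dim subvariety} with the extra central direction $kI_n$ absorbed into $\frakb_P$. The only mildly delicate point is confirming that the generic fiber of $p$ over the open subset of $G\cdot\frakb_P$ has exactly the expected dimension $(r-1)\dim\frakb_P$ and no more, so that the fiber-dimension theorem yields an equality rather than an inequality; this reduces to the observation that the stabilizer calculation used for the Richardson orbit of $\fraku_P$ persists under the scalar shift, since translation by $kI_n$ commutes with all of $G$.
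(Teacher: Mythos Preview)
Your proposal is correct and follows essentially the same approach as the paper: define $\frakb_P=\fraku_P+kI_n$, observe $G\cdot\frakb_P=G\cdot\fraku_P+kI_n$ so that $\dim G\cdot\frakb_P=2\dim\fraku_P+1$, and then apply the fiber-dimension theorem to the projection $V=G\cdot\frakb_P^r\to G\cdot\frakb_P$ to obtain $\dim V=(r+1)\dim\fraku_P+r$. Your write-up is in fact more careful than the paper's sketch, supplying the trace argument for the bijection and the explicit generic-fiber description.
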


As analyzing earlier in Theorem \ref{n>3, r>5}, this number becomes significant, i.e., it is greater than the old lower bound $n^2+(r-1)n$, when $n\ge 4$ and $r\ge 9$, or $n\ge 12$ and $r\ge 4$. One should note that there is no ``good" upper bounds for the dimensions of both $C_r(\N_n)$ and $C_r(\mathfrak{gl}_n)$ when $r\ge 3$. So finding such numbers would be an interesting problem.  

\section{Nilpotent commuting triples}
Recall that $n'_3$ is the least integer such that $C_3(\N_{n'_3})$ is reducible. Similar to the story for the variety of commuting triple of matrices \cite{GS:2000}\cite{HO:2001}\cite{H:2005}\cite{Si:2012}, determining $n'_3$ is a non-trivial problem. We prove in this section that the upper bound of $n'_3$ could be less than 16. Our method and ingredients are mainly from \cite{Gu:1992}. So we first review some notation in that paper.

\subsection{} For each positive integer $s$. Let $v$ be a $4s\times 4s$ matrix defined by
\[
\begin{pmatrix}
  0 & I_s & 0 & 0\\
  0 & 0 & 0 & I_s\\
  0 &0 &0 & 0\\
  0 &0 &0 & 0
\end{pmatrix}
\]
where $0$ is the $s$ by $s$ zero matrix. Then we have $z(v)$ is of dimension $6s^2$ and contains all $(4s\times 4s)$-matrices of the form
 \[
\begin{pmatrix}
  0 & A_1 & A_2 & A_3\\
  0 & 0 & 0 & A_1\\
  0 &0 &0 & A_4\\
  0 &0 &0 & 0
\end{pmatrix}
\]
where $A_i\in\mathfrak{gl}_s$. Set $\Gamma$ is the set of all matrices of this form. As $\Gamma$ is a linear affine space of dimension $4s^2$ and the commuting condition of a pair in $\Gamma^2$ is defined by $s^2$ equations, the dimension $\dim C_2(\Gamma)\ge 7s^2$.
 
\subsection{} Here comes the main result of this section.

\begin{theorem}\label{main theorem}
If $n=4s\ge 16$ then $C_3(\N_n)$ is reducible.
\end{theorem}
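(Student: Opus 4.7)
The plan is to exhibit an irreducible subvariety $X \subset C_3(\N_n)$ with $\dim X > \dim N_n^3 = n^2 - n + 2(n-1) = 16s^2 + 4s - 2$; by Theorem~\ref{dim C_r(N)} this forces $C_3(\N_n) \neq N_n^3$, hence $C_3(\N_n)$ is reducible. The construction mimics the one Guralnick used in \cite{Gu:1992} for $C_3(\frakgl_n)$, but carried out entirely inside the nilpotent cone via the subspace $\Gamma \subset z(v) \cap \N_n$ just introduced.

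The candidate is $X = \overline{\im \phi}$, where
\[
\phi \colon G \times C_2(\Gamma) \longrightarrow \g^3, \qquad (g, x, y) \longmapsto (gvg^{-1},\, gxg^{-1},\, gyg^{-1}).
\]
First I would verify $\im \phi \subset C_3(\N_n)$: the matrix $v$ is nilpotent, every element of $\Gamma$ is strictly block upper triangular and hence nilpotent, $\Gamma \subset z(v)$, and any two elements of a pair in $C_2(\Gamma)$ commute by definition. Next I would bound $\dim X$ below using the fiber dimension theorem. The stabilizer of $v$ under $G$-conjugation has Lie algebra $z(v)$ and so has dimension $6s^2$; once the condition $gvg^{-1} = v'$ is imposed, $g$ ranges over a coset of this stabilizer, and then the second and third coordinates of $\phi(g,x,y)$ determine $x$ and $y$. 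Thus every nonempty fiber of $\phi$ has dimension at most $6s^2$. Picking an irreducible component $W$ of $G \times C_2(\Gamma)$ of dimension at least $n^2 + 7s^2 = 23 s^2$ (using $\dim C_2(\Gamma) \geq 7s^2$ as already noted), the fiber dimension theorem gives
\[
\dim X \;\geq\; \dim \overline{\phi(W)} \;\geq\; 23 s^2 - 6 s^2 \;=\; 17 s^2.
\]

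The closing step is a numerical comparison: $17 s^2 > 16 s^2 + 4 s - 2$ rearranges to $s^2 - 4 s + 2 > 0$, whose larger root is $2 + \sqrt{2} < 4$, so the inequality holds for every integer $s \geq 4$, i.e., whenever $n = 4s \geq 16$. The only point requiring real care is the uniform fiber bound, which reduces to the identity $\dim z(v) = 6 s^2$ recorded above; the remainder is arithmetic.
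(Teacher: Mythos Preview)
Your argument is correct and essentially identical to the paper's: both use the morphism $G \times C_2(\Gamma) \to C_3(\N_n)$ sending $(g,x,y)$ to $g\cdot(v,x,y)$, bound the image dimension below by $\dim G\cdot v + \dim C_2(\Gamma) \ge (n^2-6s^2)+7s^2 = 17s^2$, and compare with $\dim N_n^3 = 16s^2+4s-2$ to force $s^2-4s+2>0$, hence $s\ge 4$. Your version simply spells out the fiber bound (via $\dim Z_G(v)=6s^2$) and the passage to an irreducible component a bit more explicitly than the paper does.
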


\begin{proof}
It is obvious that $v$ is a nilpotent matrix and $z(v)\cap\N_n$ contains $\Gamma$, so that $C_2(\Gamma)\subset C_2(z(v)\cap\N_n)$. It follows that
\[ \dim C_3(\N_n)\ge \dim\overline{G\cdot(v, C_2(z(v)\cap\N_n))} \ge \dim \overline{G\cdot(v,C_2(\Gamma))}. \]
Moreover, use the Theorem for dimension of fiber we obtain that
\[ \dim \overline{G\cdot(v,C_2(\Gamma))}\ge \dim G\cdot v+\dim C_2(\Gamma)\ge n^2-6s^2+7s^2=n^2+s^2. \]
Now suppose that $C_3(\N_n)$ is irreducible then we must have by Theorem \ref{dim C_r(N)} 
\[  C_3(\N_n)=N_n^3 \]
which implies that $\dim C_3(\N_n)=n^2+n-2$. Finally, we have
\[ n^2+n-2\ge n^2+s^2 \Rightarrow s^2-4s+2\le 0.\]
so that $s\le 3$. Therefore, the variety $C_3(\N_n)$ must be reducible when $s\ge 4$, i.e., $n\ge 16$.
\end{proof} 

\begin{remark}
The theorem implies that $n'_3\le 16$, which significantly improves the result in Corollary \ref{existence of reducibility}.
\end{remark}

\subsection{Open problems} It seems to be doable for verifying the following statements. 

\begin{conjecture}
If $C_r(\N_n)$ is reducible then so is $C_r(\N_{n+1})$.
\end{conjecture}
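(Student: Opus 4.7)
Suppose $C_r(\N_n)$ is reducible, so by Theorem~\ref{dim C_r(N)} one may choose an irreducible component $W$ of $C_r(\N_n)$ with $W\neq N_n^r$. The plan is to construct, via parabolic induction, an irreducible closed subvariety $\widetilde{W}\subseteq C_r(\N_{n+1})$ and show $\widetilde{W}\nsubseteq N_{n+1}^r$; by Theorem~\ref{dim C_r(N)} this will force $C_r(\N_{n+1})$ to be reducible. Let $\mathcal{Y}\subseteq\N_{n+1}$ denote the closed subvariety of matrices with zero last column, that is, matrices of the form $\begin{pmatrix} A & 0 \\ w^T & 0\end{pmatrix}$ with $A\in\N_n$ and $w\in k^n$. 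A direct computation gives
\[ \left[\begin{pmatrix} A_i & 0 \\ w_i^T & 0\end{pmatrix},\begin{pmatrix} A_j & 0 \\ w_j^T & 0\end{pmatrix}\right]=\begin{pmatrix}[A_i,A_j] & 0\\ w_i^TA_j-w_j^TA_i & 0\end{pmatrix}, \]
so the projection $\pi\colon C_r(\mathcal{Y})\to C_r(\N_n)$, $((A_i,w_i))\mapsto(A_i)$, has linear fiber $F(A)=\set{(w_i)\in(k^n)^r :w_i^TA_j=w_j^TA_i\text{ for all }i,j}$. Set $\widehat{W}:=\pi^{-1}(W)$ and $\widetilde{W}:=\overline{GL_{n+1}\cdot\widehat{W}}$; upper semicontinuity of $\dim F$ over the irreducible base $W$ makes both irreducible.

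To separate $\widetilde{W}$ from $N_{n+1}^r$ I use the following ``common-kernel quotient''. On the open subset $Z\subseteq C_r(\N_{n+1})$ where $\dim\bigcap_i\ker Y_i=1$, each tuple $(Y_i)\in Z$ induces, by quotienting out the common kernel, a commuting nilpotent tuple $(\bar Y_1,\ldots,\bar Y_r)\in C_r(\N_n)$ whose $GL_n$-conjugacy class is well-defined. For generic $(Y_i)\in N_{n+1}^r$, $Y_1$ is a regular nilpotent of $\frakgl_{n+1}$ whose quotient is regular nilpotent in $\frakgl_n$, and the remaining $\bar Y_j$, commuting with it and themselves nilpotent, lie in $z_{\nilreg}$; hence the quotient class lies in $N_n^r$, and by continuity this extends to all of $N_{n+1}^r\cap Z$. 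On the other hand, for generic $(A_i,w_i)\in\widehat{W}$ the common kernel of the lifted tuple equals $ke_{n+1}$ and the quotient recovers $(A_i)\in W$; by $GL_{n+1}$-invariance of the construction, the quotient class of any generic $(Y_i)\in\widetilde{W}\cap Z$ also lies in $W$. If $\widetilde{W}\subseteq N_{n+1}^r$, combining these two statements would force the $GL_n$-orbit of a generic $(A_i)\in W$ to lie in $N_n^r$, hence $W\subseteq N_n^r$—impossible since $W$ and $N_n^r$ are distinct irreducible components of $C_r(\N_n)$.

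The main obstacle is a rigorous treatment of the common-kernel quotient: the complement chosen in forming the quotient is non-canonical, so the construction only descends cleanly to $C_r(\N_n)$ modulo $GL_n$-conjugation; one can formulate it through the parabolic $Q^-\subseteq GL_{n+1}$ stabilizing $ke_{n+1}$, or through the quotient stack $[C_r(\N_n)/GL_n]$. One must also verify that the generic element of $\widetilde{W}$ lies in $Z$. Since $W\neq N_n^r$ forces generic $A_1\in W$ to be non-regular (any component whose generic first entry is regular would be contained in $N_n^r$), the lifted matrix $\begin{pmatrix} A_1 & 0 \\ w_1^T & 0\end{pmatrix}$ has kernel of dimension $\ge 2$, and one must check that a generic choice of the remaining $w_i$ nevertheless cuts the \emph{common} kernel of the lifted tuple down to the single line $ke_{n+1}$. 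This holds provided $r$ is at least the generic common-kernel dimension of $W$; for components with exceptionally large common kernel, an enlarged construction (for instance, lifting along several columns rather than one) would be needed. This dimension bookkeeping is the principal remaining technical point.
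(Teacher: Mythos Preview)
The statement you are attempting to prove is listed in the paper as an \emph{open conjecture} (in the ``Open problems'' subsection); the paper gives no proof, so there is nothing to compare your argument against. What you have written is therefore not a reproduction of the paper's reasoning but an attempted resolution of an open problem, and it should be judged as such.

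Your parabolic-induction strategy is natural, but several steps are genuinely incomplete, and at least one is incorrect as written. First, the assertion that ``upper semicontinuity of $\dim F$ over the irreducible base $W$ makes $\widehat{W}=\pi^{-1}(W)$ irreducible'' is false in general: a family of vector spaces with jumping rank over an irreducible base can easily be reducible (e.g.\ $\{(t,x):tx=0\}\subseteq\A^{2}$). You would need instead to pass to the closure of $\pi^{-1}(U)$ for a suitable open $U\subseteq W$ where the fibre dimension is constant, and then check that this still dominates $W$. Second, the ``by continuity'' step asserting that every element of $N_{n+1}^{r}\cap Z$ has quotient in $N_{n}^{r}$ is not justified: the quotient is not a morphism to $C_{r}(\N_{n})$ but only a map to $GL_{n}$-conjugacy classes, and even after passing to the incidence variety $I_{0}=\{(Y_{i}):Y_{i}e_{n+1}=0\}$ with its honest morphism $q\colon I_{0}\to C_{r}(\N_{n})$, the intersection $I_{0}\cap N_{n+1}^{r}$ need not be irreducible, so knowing $q$ lands in $N_{n}^{r}$ on one component does not settle the others. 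Third, you yourself flag the most serious obstruction: cutting the common kernel of the lifted tuple down to $ke_{n+1}$ requires the generic common kernel of $W$ to have dimension at most $r$, and even then the $w_{i}$ are constrained by $w_{i}^{T}A_{j}=w_{j}^{T}A_{i}$ and cannot be chosen freely. Until these points are handled, the argument remains a heuristic rather than a proof, which is consistent with the conjectural status the paper assigns to the statement.
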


An argument for this claim will affirm the following.

\begin{conjecture}
The variety $C_3(\N_n)$ is reducible for all $n\ge 16$.
\end{conjecture}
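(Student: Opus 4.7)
The plan is to prove Conjecture~2 via Conjecture~1. Theorem \ref{main theorem} supplies reducibility of $C_3(\N_n)$ at every $n=4s$ with $s\geq 4$, i.e.\ at $n=16,20,24,\ldots$. If Conjecture~1 holds, then reducibility propagates from $n=16$ to every integer $n\geq 16$, completing the proof. So the heart of the task is Conjecture~1.

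To prove the propagation, I would use the closed embedding
\[
\iota:C_r(\N_n)\hookrightarrow C_r(\N_{n+1}),\qquad
(x_1,\ldots,x_r)\mapsto(\tilde x_1,\ldots,\tilde x_r),\quad
\tilde x_i=\bigl(\begin{smallmatrix}x_i & 0\\ 0 & 0\end{smallmatrix}\bigr).
\]
Assuming $C_r(\N_n)$ is reducible, pick an irreducible component $W$ of $C_r(\N_n)$ distinct from (and hence not contained in) the regular component $N_n^r$, and set $W':=\overline{G_{n+1}\cdot\iota(W)}\subseteq C_r(\N_{n+1})$. The variety $W'$ is irreducible, and it suffices to show $W'\not\subseteq N_{n+1}^r$, for then $W'$ and $N_{n+1}^r$ together witness reducibility of $C_r(\N_{n+1})$.

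To rule out $W'\subseteq N_{n+1}^r$ I would combine a dimension count with a Jordan-type argument. Because the block-diagonal $G_n\times G_1\subseteq G_{n+1}$ stabilizes $\iota(W)$ setwise, $\dim W'\leq\dim W+2n$, while Theorem \ref{dim C_r(N)} yields $\dim N_{n+1}^r=\dim N_n^r+2n+r-1$. When $\dim W>\dim N_n^r+r-1$ this inequality alone gives $W'\not\subseteq N_{n+1}^r$. In the borderline case of equal dimensions, one resorts to Jordan types: the first-coordinate projection of $W'$ lies in the $G_{n+1}$-orbit closure of $\iota(\pi_1(W))$, so consists of matrices whose Jordan partition is dominated by the generic Jordan type of $\pi_1(\iota(W))$; but the first-coordinate projection of $N_{n+1}^r$ equals all of $\N_{n+1}$ and therefore contains elements of strictly larger Jordan type (e.g., a block longer than any appearing in $\iota(W)$). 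Hence $N_{n+1}^r\not\subseteq W'$, which together with matching dimensions forces $W'\neq N_{n+1}^r$ and thus $W'\not\subseteq N_{n+1}^r$.

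The main obstacle is sustaining the argument across iterated propagation. The witness supplied by Theorem \ref{main theorem} at $n=16$ exceeds $\dim N_n^3$ only by $s^2-4s+2=2$, so after one propagation step $\dim W'\leq\dim N_{n+1}^3$, with equality only in the best case. Iterating from $n=16$ through $n=17,18,19$ before the next base case $n=20$ requires either a sharper dimension estimate (e.g., showing the stabilizer of $\iota(W)$ in $G_{n+1}$ is exactly $G_n\times G_1$ up to finite index, so that $\dim W'=\dim W+2n$ holds, preserving the Jordan-type witness) or delicate Jordan-type book-keeping at each step. Should Conjecture~1 resist this analysis, a fallback is to bypass it entirely by imitating Theorem \ref{main theorem} directly for each residue of $n\bmod 4$: for $n=4s+t$ with $s\geq 4$ and $t\in\{1,2,3\}$, choose a nilpotent $v\in\N_n$ of Jordan type tailored to $n$ together with an abelian subalgebra $\Gamma\subseteq z(v)\cap\N_n$ so that $\dim\overline{G\cdot(v,C_2(\Gamma))}$ strictly exceeds $n^2+n-2$.
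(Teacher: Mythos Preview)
The paper does not prove this statement: it is listed as an open conjecture in the ``Open problems'' subsection, and the only thing the paper says about it is that it would follow from Conjecture~1 (the propagation $C_r(\N_n)$ reducible $\Rightarrow$ $C_r(\N_{n+1})$ reducible). So there is no ``paper's proof'' to compare against; your overall plan---reduce to Conjecture~1 and use Theorem~\ref{main theorem} as the base case---is exactly the route the paper suggests but does not carry out.

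Your sketch of Conjecture~1, however, has a gap beyond the obstacles you already flag. The bound $\dim W'\le\dim W+2n$ obtained from the setwise stabilizer $G_n\times G_1$ is an \emph{upper} bound on $\dim W'$, but to deduce $W'\not\subseteq N_{n+1}^r$ by dimension you need the \emph{lower} bound $\dim W'>\dim N_{n+1}^r$; the sentence ``this inequality alone gives $W'\not\subseteq N_{n+1}^r$'' therefore points in the wrong direction. Similarly, your Jordan-type argument establishes only $N_{n+1}^r\not\subseteq W'$ (every matrix in $\pi_1(W')$ satisfies $x^n=0$, whereas $N_{n+1}^r$ contains the regular nilpotent), which is the opposite non-containment: it does not by itself rule out $W'\subsetneq N_{n+1}^r$. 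Both steps would be rescued by the \emph{equality} $\dim W'=\dim W+2n$, and you correctly identify later that this amounts to showing the setwise stabilizer of $\iota(W)$ is exactly $G_n\times G_1$ up to finite index---but that is the missing ingredient, not a refinement needed only for iteration. Your fallback of constructing explicit witnesses for each residue $n\bmod 4$ is a sensible alternative, but is not carried out. In short, this is a research outline for an open problem rather than a proof, and the paper itself offers nothing further.
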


\providecommand{\bysame}{\leavevmode\hbox to3em{\hrulefill}\thinspace}


\begin{thebibliography}{BNPP}


\bibitem[B]{Ba:2000}
R.~Basili, {\em On the irreducibility of commuting varieties of nilpotent matrices}, J. Pure Appl. Algebra, \textbf{149} (2000), 107--120.

\bibitem[Ba]{Ba:2001}
V.~Baranovsky, {\em The varieties of pairs of commuting nilpotent matrices is irreducible}, Transform. Groups, \textbf{6} (2001), 3--8.

\bibitem[BI]{BI:2008}
R.~Basili and A.~Iarrobino, {\em Pairs of commuting nilpotent matrices,
and Hilbert function}, J. Algebra, \textbf{320} (2008), 1235--1254.

\bibitem[G]{G:1961}

M.~Gerstenhaber, {\em On dominance and varieties of commuting Matrices}, Ann. Math., \textbf{73} (1961), 324--348.

\bibitem[Gu]{Gu:1992}

R.~M.~Guralnick, {\em A note on commuting pairs of matrices}, Linear and Multilinear Alg., \textbf{31} (1992), 71--75.

\bibitem[GS]{GS:2000}

R.~M. Guralnick and B.~A. Sethuraman,{\em Commuting pairs and triples of matrices and related varieties }, Linear Algebra Appl., \textbf{310} (2000), 139--148.

\bibitem[H]{H:2005}

Y.~Han, {\em Commuting triples of matrices}, ELA. Electronis J. of Lin. Alg., \textbf{13} (2005), 274--343.

\bibitem[HO]{HO:2001}
J.~Holbrook and M.~Omladic, {\em Approximating commuting operators}, Linear Alg. and its App., \textbf{327} (2001), 131--149.

\bibitem[Jan]{Jan:2004}

J.~C. Jantzen and K-H.~Neeb, {\em Lie Theory: Lie Algebras and Representations}, Progress in Mathematics, Birkhauser, \textbf{228} 2004.



\bibitem[L]{Le:2002}
P.~Levy, {\em Commuting varieties of Lie algebras over fields of prime characteristic}, J. Algebra, \textbf{250} (2002), 473--484.

\bibitem[MT]{MT:1955}
T.~Motzkin and O.~Taussky, {\em Pairs of matrices with property L. II}, Trans. AMS., \textbf{80} (1955), 387--401.

\bibitem[N]{N:2012}
N. V.~Ngo, {\em Commuting varieties of $r$-tuples over Lie algebras}, submitted.

\bibitem[Pr]{Pr:2003}

A.~Premet, {\em Nilpotent commuting varieties of reductive Lie algebras}, Invent. Math., \textbf{154} (2003), 653--683.

\bibitem[R]{R:1979}
R. W.~Richardson, {\em Commuting varieties of semisimple Lie algebras and algebraic groups}, Comp. Math., \textbf{38} (1979), 311--327.

\bibitem[SFB]{SFB:1997}
A.~Suslin, E. M.~Friedlander, and C. P.~Bendel, {\em Infinitesimal 1-parameter subgroups and cohomology}, J. Amer. Math. Soc, \textbf{10} (1997), 693--728.

\bibitem[Si]{Si:2012}

K.~$\check{S}$ivic, {\em On varieties of commuting triples III}, Linear Alg. and its App., \textbf{437} (2012), 393--460.

\bibitem[Ya]{Ya:2009}
O.~ Yakimova, {\em Surprising properties of centralisers in classical Lie algebras},
Annales de l'institut Fourier, \textbf{59} (2009), 903--935.

\bibitem[Yo]{Yo:2010}
H.~Young, {\em Components of Algebraic Sets of Commuting and
Nearly Commuting Matrices}, Ph.D. Thesis at the University of Michigan (2010).

\end{thebibliography}
\end{document}